\numberwithin{equation}{section}
\newtheorem{theorem}{Theorem}[section]
\newtheorem{lemma}[theorem]{Lemma}
\begin{document}

\title[Multiple orthogonal polynomials]{Ratio asymptotics for multiple orthogonal polynomials}
\author{Walter Van Assche}
\address{KU Leuven, Department of Mathematics, Celestijnenlaan 200 B box 2400, BE-3001 Leuven, Belgium}
\email{walter@wis.kuleuven.be}
\thanks{Supported by KU Leuven research grant OT/12/073, FWO research project G.0934.13, and the Belgian Interuniversity Attraction Poles programme P7/18.}

\subjclass[2010]{Primary 42C05}
\date{\today}

\begin{abstract}
We give the asymptotic behavior of the ratio of two neighboring multiple orthogonal polynomials
under the condition that the recurrence coefficients in the nearest neighbor recurrence relations
converge. 
\end{abstract}

\maketitle

\section{Introduction}

Multiple orthogonal polynomials (or Hermite-Pad\'e polynomials) are monic polynomials $P_{\vec{n}}$ with a multi-index 
$n=(n_1,\ldots,n_r) \in \mathbb{N}^r$ and of degree $|\vec{n}|=n_1+\cdots +n_r$ satisfying the orthogonality relations
\[  \int P_{\vec{n}}(x) x^k \, d\mu_j(x) = 0, \qquad k=0,1,\ldots,n_j-1, \ 1 \leq j \leq r, \]
for a system of $r$ positive measures $(\mu_1,\ldots,\mu_r)$ on the real line. They  
satisfy a system of nearest neighbor recurrence relations \cite[\S 23.1.4]{Ismail} \cite{WVA2}
\begin{equation}  \label{rrel}
  x P_{\vec{n}}(x) = P_{\vec{n}+\vec{e}_k}(x) + b_{\vec{n},k} P_{\vec{n}}(x) +
    \sum_{j=1}^r a_{\vec{n},j} P_{\vec{n}-\vec{e}_j}(x),   
\end{equation}
for $1 \leq k \leq r$. We will assume that the recurrence coefficients satisfy the following. Let
$n_j = \lfloor q_j n \rfloor$, where $q_j > 0$ and $\sum_{j=1}^r q_j = 1$, so that $|\vec{n}|/ n \to 1$
as $n \to \infty$. We say that multiple orthogonal polynomials belong to the class $M(\vec{a},\vec{b})$ if
\begin{equation} \label{limab}
    \lim_{n \to \infty} a_{\vec{n},j} = a_j, \quad \lim_{n \to \infty} b_{\vec{n},j} = b_j.  
\end{equation}
Observe that the limits $\vec{a} = (a_1,\ldots,a_r)$ and $\vec{b} = (b_1,\ldots,b_r)$ depend on the parameters
$(q_1,\ldots,q_r)$ which indicate the direction in $\mathbb{N}^r$ in which the multi-index $\vec{n}$ tends to infinity. Our interest is to obtain the asymptotic behavior of the ratio $P_{\vec{n}+\vec{e}_k}(x)/P_{\vec{n}}(x)$ of two neighboring multiple orthogonal polynomials. The main result is

\begin{theorem} \label{thm1}
Suppose the multiple orthogonal polynomials $P_{\vec{n}}$ and $P_{\vec{n}+\vec{e}_k}$ have interlacing real zeros for every $\vec{n}$ 
and $1 \leq k \leq r$ and the recurrence coefficients have asymptotic behavior given by
\eqref{limab}, where $n_j = \lfloor q_j n \rfloor$, with $q_j > 0$ and $\sum_{j=1}^r q_j = 1$, and $b_i \neq b_j$ whenever $i \neq j$. Then
\[  \lim_{n \to \infty} \frac{P_{\vec{n}+\vec{e}_k}(x)}{P_{\vec{n}}(x)} = z(x)- b_k   \]
uniformly on compact subsets $K$ of $\mathbb{C} \setminus \mathbb{R}$, where
$z$ is the solution of the algebraic equation
\begin{equation}  \label{algeq}
     (z-x) B_r(z) + A_{r-1}(z) = 0  
\end{equation}
for which $z(x) - x \to 0$ when $x \to \infty$,
where $B_r(z) = (z-b_1)(z-b_2)\cdots (z-b_r)$ and $A_{r-1}$ is the polynomial of degree $r-1$ for which
\begin{equation}   \label{parfrac1}
 \frac{A_{r-1}(z)}{B_r(z)} = \sum_{j=1}^r \frac{a_j}{z-b_j} . 
\end{equation}
\end{theorem}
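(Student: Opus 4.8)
The plan is to convert the recurrence \eqref{rrel} into a nonlinear functional equation for the ratios and then exploit a contraction property of the limiting equation on the upper half-plane. Write $R_{\vec n,k}(x) = P_{\vec n+\vec e_k}(x)/P_{\vec n}(x)$. Dividing \eqref{rrel} by $P_{\vec n}$ and using $P_{\vec n-\vec e_j}/P_{\vec n} = 1/R_{\vec n-\vec e_j,j}$ gives
\[ R_{\vec n,k}(x) = x - b_{\vec n,k} - \sum_{j=1}^r \frac{a_{\vec n,j}}{R_{\vec n-\vec e_j,j}(x)}. \]
Since the $x$ and the sum are independent of $k$, subtracting this identity for two indices $k,l$ yields the exact relation $R_{\vec n,k}+b_{\vec n,k}=R_{\vec n,l}+b_{\vec n,l}$. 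Hence $Z_{\vec n}(x):=R_{\vec n,k}(x)+b_{\vec n,k}$ is independent of $k$, and the system collapses to the single scalar equation
\[ Z_{\vec n}(x) = x - \sum_{j=1}^r \frac{a_{\vec n,j}}{Z_{\vec n-\vec e_j}(x)-b_{\vec n-\vec e_j,j}}. \]
Any limit $z$ of $Z_{\vec n}$ must satisfy $z = x - \sum_j a_j/(z-b_j)$, which is exactly \eqref{algeq} after clearing $B_r$; then $R_{\vec n,k}=Z_{\vec n}-b_{\vec n,k}\to z-b_k$, as claimed. It therefore remains to prove convergence to the correct branch.

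Next I would use the interlacing hypothesis to install the analytic framework. Because the zeros of $P_{\vec n}$ and $P_{\vec n+\vec e_k}$ interlace, $1/R_{\vec n,k}=P_{\vec n}/P_{\vec n+\vec e_k}$ is the Cauchy transform $\int d\nu_{\vec n,k}(t)/(x-t)$ of a probability measure on the (real) zeros of $P_{\vec n+\vec e_k}$ with positive masses. Consequently each $R_{\vec n,k}$, and hence $Z_{\vec n}$, maps the upper half-plane $\mathbb{H}=\{\operatorname{Im}x>0\}$ into itself. Moreover the recurrence coefficients are bounded (they converge), so the zeros lie in a fixed compact real interval; this confines all values $Z_{\vec m}(x)$, for $x$ in a compact set $K\subset\mathbb{H}$, to one fixed compact subset of $\mathbb{H}$. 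This is the compactness that a normal-family argument would otherwise supply.

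Then comes the engine: the limiting map $\Psi(w_1,\ldots,w_r)=x-\sum_j a_j/(w_j-b_j)$ is a strict contraction of the hyperbolic metric $\delta$ on $\mathbb{H}$. Each coordinate map $w_j\mapsto -a_j/(w_j-b_j)$ is a hyperbolic isometry, while adding the point $x$ with $\operatorname{Im}x>0$ strictly shrinks distances; the mediant inequality $(\sum_j|du_j|)/(\sum_j\operatorname{Im}u_j)\le\max_j|du_j|/\operatorname{Im}u_j$ gives $\delta(\Psi(\vec w),\Psi(\vec w'))\le\theta\max_j\delta(w_j,w_j')$ with $\theta=\theta_K<1$ uniform on $K$. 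Since the coefficients converge, $\Psi_{\vec n}\to\Psi$ uniformly on $K$, so with $M_n=\sup\{\delta(Z_{\vec m}(x),z(x)):|\vec m|=n,\ x\in K\}$ one gets $M_n\le\theta M_{n-1}+\varepsilon_n$ with $\varepsilon_n\to0$ and $M_n$ bounded, forcing $M_n\to0$. The correct branch is pinned down by $Z_{\vec n}(x)=x-O(1/x)$ at infinity, which selects the solution with $z(x)-x\to0$; the lower half-plane is covered by the symmetry $Z_{\vec n}(\bar x)=\overline{Z_{\vec n}(x)}$.

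The main obstacle is precisely this last mechanism, and two points deserve care. The coupling $\vec n\mapsto\vec n-\vec e_j$ moves off the chosen direction-path $\vec n=(\lfloor q_1 n\rfloor,\ldots,\lfloor q_r n\rfloor)$, so one cannot argue along a single subsequence; taking the supremum $M_n$ over \emph{all} multi-indices of total degree $n$ is what closes the recursion, and it does so only because $\theta_K$ is uniform. Securing that uniform $\theta_K<1$ is the heart of the matter: it rests on the slack $\operatorname{Im}x>0$ together with the interlacing-driven confinement of every $Z_{\vec m}(x)$ to a common compact part of $\mathbb{H}$, which is exactly where the assumptions $b_i\neq b_j$ (simple poles in \eqref{parfrac1}) and bounded recurrence coefficients are genuinely used.
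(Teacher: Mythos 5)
Your algebraic reduction is correct and matches the paper's: the quantity $Z_{\vec n}=R_{\vec n,k}+b_{\vec n,k}$ is indeed independent of $k$, the functional equation for $Z_{\vec n}$ is exactly the paper's \eqref{ratiorec} repackaged, and the branch selection at infinity is the same. The first genuine gap is in your ``engine'': the hyperbolic contraction silently assumes $a_j>0$ (and $a_{\vec n,j}>0$). The map $w_j\mapsto -a_j/(w_j-b_j)$ is a real M\"obius transformation of determinant $a_j$; it preserves $\mathbb{H}$ and is a hyperbolic isometry only when $a_j>0$, and it maps $\mathbb{H}$ into the lower half-plane when $a_j<0$. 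The theorem assumes only interlacing, convergence \eqref{limab}, and distinct $b_j$; interlacing does not force positivity of the $a$'s --- the paper explicitly notes (Jacobi-Pi\~neiro, an AT system) that the $a_{\vec n,j}$, and hence their limits $a_j$, can be negative. In that case $\Psi$ need not even map $\mathbb{H}^r$ into $\mathbb{H}$, there is no contraction, and your argument collapses; you have proved only a sub-case. The paper's route --- the bound $|P_{\vec n-\vec e_j}/P_{\vec n}|\le 1/\delta$ from interlacing, Montel's theorem, the shift-invariance Lemma~\ref{lem}, and Vitali's theorem --- never uses the sign of the $a$'s.

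The second gap is the closure of your recursion with $M_n=\sup\{\delta(Z_{\vec m}(x),z(x)):|\vec m|=n,\ x\in K\}$ taken over \emph{all} multi-indices of total degree $n$. The limits $a_j,b_j$, and therefore $z(x)$, depend on the direction $(q_1,\ldots,q_r)$, while hypothesis \eqref{limab} gives convergence only along the ray $n_j=\lfloor q_jn\rfloor$. At multi-indices of degree $n$ far from this ray (e.g.\ $(n,0,\ldots,0)$) the coefficients do not approach these $a_j,b_j$ --- in the paper's examples (multiple Laguerre, Jacobi-Pi\~neiro) they approach the limits belonging to a different direction --- so your $\varepsilon_n$ does not tend to $0$ and in fact $M_n\not\to 0$: the inequality $M_n\le\theta M_{n-1}+\varepsilon_n$ cannot be established. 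What is needed is a localization of the induction to a neighborhood of the ray, with the damping $\theta^k$ absorbing the uncontrolled contributions from the boundary of that neighborhood; this is precisely what the paper's Lemmas~\ref{estimate} and~\ref{lim} accomplish (the strips $I_j$ with geometric decay), combined with first taking the compact set $K^*$ far from $\mathbb{R}$ so that the coefficients $\hat a_j/\delta^2$ of the recursion are small, and then recovering arbitrary compact sets by Vitali's theorem. A lesser but real flaw: your claim that bounded recurrence coefficients confine all zeros to a fixed compact interval is a Jacobi-matrix fact for $r=1$ and is unproved for multiple orthogonal polynomials; fortunately it is also unnecessary, since the confinement of the values $Z_{\vec m}(x)$ follows from the functional equation together with $|1/R_{\vec n,k}|\le 1/\delta$ and $\mathrm{Im}\, R_{\vec n,k}(x)\ge \mathrm{Im}\, x$.
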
  

The condition that $P_{\vec{n}}$ and $P_{\vec{n}+\vec{e}_k}$ have interlacing real zeros for every $\vec{n}$ and $1 \leq k \leq r$ is not easy to
check, but there are sufficient conditions that give this interlacing property. 
For instance, if $a_{\vec{n},j} >0$ for every $\vec{n}$ for which $n_j >0$, then
the interlacing property holds \cite[Thm.~2.2]{HVA}.

If the recurrence coefficients are unbounded, then one can investigate the ratio of two neighboring multiple orthogonal polynomials in which the variable
is scaled, taking into account the growth of the recurrence coefficients. 

\begin{theorem} \label{thm2}
Suppose the multiple orthogonal polynomials $P_{\vec{n}}$ and $P_{\vec{n}+\vec{e}_k}$ have interlacing real zeros for every $\vec{n}$ and $1 \leq k \leq r$
and that for some $\gamma >0$
\begin{equation} \label{limabN}
    \lim_{n \to \infty} \frac{a_{\vec{n},j}}{n^{2\gamma}} = a_j, \quad \lim_{n \to \infty} \frac{b_{\vec{n},j}}{n^\gamma} = b_j,  
\end{equation}
where $\vec{n} = (\lfloor q_1 n \rfloor,\ldots,\lfloor q_r n\rfloor)$, with $q_j > 0$ and $\sum_{j=1}^r q_j = 1$, and $b_i \neq b_j$ whenever $i \neq j$. Then
\[  \lim_{n \to \infty} \frac{P_{\vec{n}+\vec{e}_k}(n^\gamma x)}{n^\gamma P_{\vec{n}}(n^\gamma x)} = z(x)- b_k   \]
uniformly on compact subsets $K$ of $\mathbb{C} \setminus \mathbb{R}$, where
$z$ is the solution of the algebraic equation
\begin{equation}  \label{algeq2}
     (z-x) B_r(z) + A_{r-1}(z) = 0  
\end{equation}
for which $z(x) - x \to 0$ when $x \to \infty$,
where $B_r(z) = (z-b_1)(z-b_2)\cdots (z-b_r)$ and $A_{r-1}$ is the polynomial of degree $r-1$ for which
\begin{equation} \label{parfrac2}
   \frac{A_{r-1}(z)}{B_r(z)} = \sum_{j=1}^r \frac{a_j}{z-b_j} . 
\end{equation}
\end{theorem}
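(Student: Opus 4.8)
The plan is to reduce Theorem~\ref{thm2} to Theorem~\ref{thm1} by a rescaling of the variable. Fix the running parameter $n$ and introduce the rescaled monic polynomials
\[ \tilde{P}_{\vec{m}}(x) = \frac{1}{n^{\gamma|\vec{m}|}} P_{\vec{m}}(n^\gamma x), \]
for all multi-indices $\vec{m}$ in a fixed neighborhood of $\vec{n}$. Since $P_{\vec{m}}$ is monic of degree $|\vec{m}|$, each $\tilde{P}_{\vec{m}}$ is again monic of degree $|\vec{m}|$, and because $n^\gamma>0$ the zeros of $\tilde{P}_{\vec{m}}$ are simply the zeros of $P_{\vec{m}}$ divided by $n^\gamma$. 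In particular the real-zero and interlacing hypotheses carry over unchanged to the family $\tilde{P}_{\vec{m}}$.

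First I would substitute $x \mapsto n^\gamma x$ in the recurrence relation \eqref{rrel} and divide by $n^{\gamma(|\vec{n}|+1)}$. A direct computation then shows that the rescaled polynomials satisfy the same nearest neighbor recurrence
\[ x \tilde{P}_{\vec{n}}(x) = \tilde{P}_{\vec{n}+\vec{e}_k}(x) + \tilde{b}_{\vec{n},k}\tilde{P}_{\vec{n}}(x) + \sum_{j=1}^r \tilde{a}_{\vec{n},j}\tilde{P}_{\vec{n}-\vec{e}_j}(x), \]
now with recurrence coefficients $\tilde{b}_{\vec{n},k} = b_{\vec{n},k}/n^\gamma$ and $\tilde{a}_{\vec{n},j} = a_{\vec{n},j}/n^{2\gamma}$. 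By the scaling assumption \eqref{limabN} these converge, $\tilde{b}_{\vec{n},k}\to b_k$ and $\tilde{a}_{\vec{n},j}\to a_j$, which is precisely the convergence \eqref{limab} required in Theorem~\ref{thm1}, and the limiting data $\vec{a},\vec{b}$ feed into the identical algebraic equation and partial-fraction normalization \eqref{algeq}--\eqref{parfrac1}. Theorem~\ref{thm1} then yields $\tilde{P}_{\vec{n}+\vec{e}_k}(x)/\tilde{P}_{\vec{n}}(x) \to z(x)-b_k$ uniformly on compact subsets of $\mathbb{C}\setminus\mathbb{R}$. Unwinding the definition gives $\tilde{P}_{\vec{n}+\vec{e}_k}(x)/\tilde{P}_{\vec{n}}(x) = P_{\vec{n}+\vec{e}_k}(n^\gamma x)/(n^\gamma P_{\vec{n}}(n^\gamma x))$, so this is exactly the assertion of Theorem~\ref{thm2}.

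The one point that needs care --- and where I expect the real obstacle --- is that the auxiliary family $\tilde{P}_{\vec{m}}$ depends on $n$ not only through the direction of the multi-index but also through the scaling factor $n^\gamma$, so it is a doubly-indexed array rather than a single fixed sequence of multiple orthogonal polynomials of the kind Theorem~\ref{thm1} literally addresses. To close this gap I would check that the proof of Theorem~\ref{thm1} is entirely local: it uses only the recurrence relation \eqref{rrel} among finitely many neighboring indices together with the boundedness and normal-family control furnished by the interlacing property, and the identification of the limit through \eqref{algeq}. None of these ingredients sees anything beyond the convergence of the recurrence coefficients near $\vec{n}$, which we have just established for the $\tilde{a}_{\vec{n},j}$ and $\tilde{b}_{\vec{n},k}$. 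Hence the argument proving Theorem~\ref{thm1} applies verbatim to the rescaled array, and the conclusion of Theorem~\ref{thm2} follows.
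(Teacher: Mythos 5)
Your rescaling bookkeeping is correct, and it is in fact the same device the paper uses: substituting $n^\gamma x$ into \eqref{rrel} and normalizing produces exactly the effective coefficients $b_{\vec{m},k}/n^\gamma$ and $a_{\vec{m},j}/n^{2\gamma}$, and your identity $\tilde{P}_{\vec{n}+\vec{e}_k}(x)/\tilde{P}_{\vec{n}}(x) = P_{\vec{n}+\vec{e}_k}(n^\gamma x)/\bigl(n^\gamma P_{\vec{n}}(n^\gamma x)\bigr)$ is right. The gap is in the step where you dismiss the double indexing. You correctly observe that Theorem \ref{thm1} cannot be invoked as a statement: for fixed $n$ the rescaled family belongs to no class $M(\vec{a},\vec{b})$, since $a_{\vec{m},j}/n^{2\gamma} \to \infty$ as $|\vec{m}| \to \infty$. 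But your repair --- the claim that the proof of Theorem \ref{thm1} is ``entirely local'' and ``uses only the recurrence relation among finitely many neighboring indices'' --- is false. The proof has one genuinely non-local ingredient, and it is essential: Lemma \ref{lem}, which is what lets one pass from $P_{\vec{n}-\vec{e}_j}(x)/P_{\vec{n}}(x) \to h_j(x)$ to $P_{\vec{n}+\vec{e}_j}(x)/P_{\vec{n}}(x) \to 1/h_j(x)$; without it the recurrence \eqref{ratiorec} introduces a new unknown limit at each neighboring index and the system never closes, so \eqref{defz} and the algebraic equation \eqref{algeq} cannot be derived. Lemma \ref{lem} is proved by iterating the recurrence inequality through Lemma \ref{estimate}, an induction on $|\vec{n}|$ that starts at $\vec{n}=\vec{0}$ and yields a bound summed over \emph{all} multi-indices $\vec{m} \leq \vec{n}$; controlling that sum (Lemma \ref{lim}) requires uniform bounds $a_{\vec{m},j} \leq \hat{a}_j$ and smallness of the increments $|a_{\vec{m},j}-a_{\vec{m}-\vec{e}_\ell,j}|$, $|b_{\vec{m},j}-b_{\vec{m}-\vec{e}_\ell,j}|$ over the whole region $\{\vec{m} \leq \vec{n}\}$, a geometric-decay estimate near the boundary strips $I_j$, and finally Vitali's theorem to pass from compacts far from $\mathbb{R}$ back to arbitrary compacts. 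None of this is confined to a neighborhood of $\vec{n}$.

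What your proposal is therefore missing is the verification that this global machinery survives the $n$-dependent rescaling: one must check that the doubly-indexed coefficients $a_{\vec{m},j}/n^{2\gamma}$ are bounded uniformly in $n$ and in $\vec{m} \leq \vec{n}$, and that the scaled increments $|b_{\vec{m},j}-b_{\vec{m}-\vec{e}_\ell,j}|/n^\gamma$ and $|a_{\vec{m},j}-a_{\vec{m}-\vec{e}_\ell,j}|/n^{2\gamma}$ are uniformly bounded and become small, uniformly in $n$, once all components of $\vec{m}$ are large, so that the conclusions of Lemmas \ref{estimate} and \ref{lim} hold for the array in which the lattice variable and the scaling parameter are tied together. (These checks do go through under the natural reading of \eqref{limabN}, e.g.\ $|b_{\vec{m},j}-b_{\vec{m}-\vec{e}_\ell,j}|/n^\gamma \leq |b_{\vec{m},j}-b_{\vec{m}-\vec{e}_\ell,j}|/|\vec{m}|^\gamma$ for $\vec{m}\leq\vec{n}$, but they are precisely the content of the reduction and cannot be waved away.) This is exactly what the paper's terse proof of Theorem \ref{thm2} does: it reruns the Theorem \ref{thm1} argument with \eqref{ratiorec2} and \eqref{limabN} in place of \eqref{ratiorec} and \eqref{limab}, i.e.\ with the scaled coefficients carried through Section \ref{sec:lem} as well. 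So your strategy coincides with the paper's and your conclusion is correct, but the locality argument offered as justification for the crucial transfer step is wrong and must be replaced by the uniform-in-$n$ rerun of the three lemmas.
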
  

The multiple orthogonal polynomials $(Q_m)_{m \in \mathbb{N}}$ on the stepline, i.e., for $m=kr+j$ we set 
$\vec{n}_m=(k+1,\ldots,k+1,k,\ldots,k)$, with $j$ times $k+1$ and $r-j$ times $k$,
and $Q_m = P_{\vec{n}_m}$, satisfy a higher order recurrence relation of the form
\[   xQ_m(x) = Q_{m+1}(x) + \sum_{k=0}^{r} \beta_{m,k} Q_{m-k}(x), \]
and many authors investigated the multiple orthogonal polynomials on the stepline and the recurrence coefficients $\beta_{m,k}$.
Kalyagin \cite{Kalyagin} showed that for Angelesco systems (i.e., the measures $\mu_j$ are supported on pairwise disjoint intervals $\Delta_j$)
the recurrence coefficients $(\beta_{m,k})_m$ are asymptotically periodic for $0 \leq k \leq r$ and he 
investigated the asymptotic behavior of the polynomials 
(assuming a Szeg\H{o} condition for each $\mu_j$ on $\Delta_j$) and some ratio asymptotics. Aptekarev, Kalyagin and Saff \cite{ApKaSaff} 
investigated the ratio asymptotics of the stepline polynomials when $\beta_{m,k}=0$ for $0 \leq k \leq r-1$ and $\lim_{m \to \infty} \beta_{m,r} = b$.
Aptekarev et al. \cite{ApKaLoRo} showed that the recurrence coefficients $(\beta_{m,k})_m$ are asymptotically periodic for Nikishin systems, and
the ratio of neighboring multiple orthogonal polynomials for Nikishin systems was investigated in \cite{ApLoRo}, \cite{LoLo} and \cite{DeLoLo}.
In this paper we are interested in the ratio asymptotics of multiple orthogonal polynomials when the multi-index tends to infinity in any
direction in $\mathbb{N}^r$. The recurrence relation for the stepline polynomials only gives asymptotics for the multiple orthogonal polynomials
near the diagonal, i.e., the case when $q_j=1/r$ for $1 \leq j \leq r$. The nearest neighbor recurrence relations allow us to investigate
the asymptotic behavior in any direction. Our Theorems \ref{thm1} and \ref{thm2} show that, under the conditions that we impose, 
the ratio asymptotics for two neighboring multiple orthogonal polynomials is given in terms of an algebraic function $z(x)$ which is the solution of an algebraic equation \eqref{algeq} of order $r+1$. The Riemann surface for this algebraic function has genus 0 since for given $z$ one can find 
$x$ in a unique way from \eqref{algeq}. We illustrate our results using some known systems of multiple orthogonal polynomials in Section \ref{sec:ex}.

\section{Proof of Theorem \ref{thm1}}

Use the recurrence relation \eqref{rrel} and divide by $P_{\vec{n}}(x)$ to find
\begin{equation}  \label{ratiorec}
   x = \frac{P_{\vec{n}+\vec{e}_k}(x)}{P_{\vec{n}}(x)} + b_{\vec{n},k} + \sum_{j=1}^r a_{\vec{n},j} \frac{P_{\vec{n}-\vec{e}_j}(x)}{P_{\vec{n}}(x)}.
\end{equation}
The interlacing of the zeros of $P_{\vec{n}}$ and $P_{\vec{n}-\vec{e}_j}$ implies the partial fractions decomposition
\[   \frac{P_{\vec{n}-\vec{e}_j}(x)}{P_{\vec{n}}(x)} = \sum_{k=1}^{|\vec{n}|} \frac{A_{\vec{n},k}}{x-x_{\vec{n},k}}, \]
with positive residues $A_{\vec{n},k} > 0$, where $\{x_{\vec{n},k}, \ 1 \leq k \leq |\vec{n}| \}$ are the zeros of $P_{\vec{n}}$.
Furthermore, since all our multiple orthogonal polynomials are monic, we have
\[   \sum_{k=1}^{|\vec{n}|} A_{\vec{n},k} = 1.  \]
Hence for $x \in K \subset \mathbb{C} \setminus \mathbb{R}$ we have the bound
\begin{equation}  \label{Pbound}
  \left|  \frac{P_{\vec{n}-\vec{e}_j}(x)}{P_{\vec{n}}(x)}  \right| \leq \sum_{k=1}^{|\vec{n}|} \frac{A_{\vec{n},k}}{|x-x_{\vec{n},k}|} \leq \frac{1}{\delta}, 
\end{equation}
where $\delta = \min \{|x-y| : \ x \in K,\ y \in \mathbb{R} \} > 0$ is the minimal distance between $K$ and $\mathbb{R}$.
Hence $\{ P_{\vec{n}-\vec{e}_j}(x)/P_{\vec{n}}(x) : \ \vec{n} \in \mathbb{N}^r \}$ is a normal family on every compact $K \subset \mathbb{C} \setminus \mathbb{R}$, and by Montel's theorem there exists a subsequence $(n_k)_{k \in \mathbb{N}}$ so that
$P_{\vec{n}_k-\vec{e}_j}(x)/P_{\vec{n}_k}(x)$ converges uniformly on $K$, where $\vec{n}_k = (\lfloor q_1n_k\rfloor,\ldots,\lfloor q_r n_k \rfloor)$.
By taking a further subsequence we can conclude that
there is a subsequence $(n_k)_{k \in \mathbb{N}}$ such that
\[    \lim_{k \to \infty}  \frac{P_{\vec{n}_k-\vec{e}_j}(x)}{P_{\vec{n}_k}(x)} = h_j(x)  \]
uniformly on $K$ for every $j$ with $1 \leq j \leq r$. Clearly $h_j$ is analytic on $K$ and $h_j(x) \to 0$ when $x \to \infty$.
From Lemma \ref{lem} in Section \ref{sec:lem} we find that along this subsequence we also have
\[    \lim_{k \to \infty} \frac{P_{\vec{n}_k+\vec{e}_j}(x)}{P_{\vec{n}_k}(x)} = \frac{1}{h_j(x)} ,  \]
uniformly on $K$. Hence if we take the limit along this subsequence in \eqref{ratiorec} and use \eqref{limab}, then
\[    x =  \frac{1}{h_k(x)} + b_k + \sum_{j=1}^r a_j h_j(x)  .  \]
Define $z=z(x)$ by
\begin{equation}  \label{defz}    
z = x - \sum_{j=1}^r a_j h_j(x), 
\end{equation}
then we find that
\[    h_k(x) = \frac{1}{z-b_k}   , \qquad  1 \leq k \leq r, \]
so that \eqref{defz} implies
\[          x-z  =  \sum_{j=1}^r \frac{a_j}{z-b_j} = \frac{A_{r-1}(z)}{B_r(z)},  \]
which gives the algebraic equation \eqref{algeq}. For $x \to \infty$ we either have that $z(x)$ remains bounded or $z(x)$ is unbounded.
In the latter case
\[  \lim_{x \to \infty} \frac{A_{r-1}(z)}{B_r(z)} = 0,  \]
so that $z(x) - x \to 0$, which is the solution that we want. If $z(x)$ remains bounded, then  $A_{r-1}(z)/B_r(z)$ is unbounded as $x \to \infty$,
which is only possible when  $z(x) \to b_j$. This gives $r$ other solutions of the algebraic equation \eqref{algeq}.
Hence along the subsequence $(n_k)_{k \in \mathbb{N}}$ we found the limits $1/h_j(x) = z(x)-b_j$ for $1 \leq j \leq r$. 
The solution $z$ is independent of the subsequence $(n_k)_{k \in \mathbb{N}}$, 
hence every converging subsequence gives the same limit functions, which implies that the full sequence converges.

\section{Some technical lemmas}  \label{sec:lem}

An important step in the proof is that $P_{\vec{n}-\vec{e}_j}(x)/P_{\vec{n}}(x)$ and $P_{\vec{n}}(x)/P_{\vec{n}+\vec{e}_j}(x)$
have the same limit as $n \to \infty$. To prove this, one needs some extra results.

\begin{lemma} \label{estimate}
Suppose $\{D_{\vec{n}}, \vec{n} \in \mathbb{N}^r\}$ are positive quantities, with $D_{\vec{n}} = 0$ whenever $n_j < 0$ for some $j \in \{1,\ldots, r\}$, satisfying
\[    D_{\vec{n}} \leq  A_{\vec{n}} + \sum_{j=1}^r a_j D_{\vec{n}-\vec{e}_j}, \]
with given quantities $A_{\vec{n}} > 0$ and $a_j > 0$ $(1 \leq j \leq r)$.
Then
\[     D_{\vec{n}} \leq \sum_{\vec{m} \leq \vec{n}} A_{\vec{n}-\vec{m}} \binom{|\vec{m}|}{m_1,\ldots,m_r} a_1^{m_1} \cdots a_r^{m_r}. \]
\end{lemma}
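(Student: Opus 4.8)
The plan is to prove the bound by induction on $|\vec{n}|$, unwinding the recursive inequality $D_{\vec{n}} \leq A_{\vec{n}} + \sum_j a_j D_{\vec{n}-\vec{e}_j}$ completely until every term on the right is expressed in the $A$-quantities. The base case is $|\vec{n}| = 0$, i.e. $\vec{n} = \vec{0}$, where the sum over $j$ vanishes because each $D_{-\vec{e}_j} = 0$, giving $D_{\vec{0}} \leq A_{\vec{0}}$, which matches the claimed formula (the only multi-index $\vec{m} \leq \vec{0}$ is $\vec{m} = \vec{0}$, contributing $A_{\vec{0}}$). The convention $D_{\vec{n}} = 0$ when some $n_j < 0$ ensures the recursion terminates cleanly at the boundary and that we never pick up spurious terms.

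For the inductive step I would assume the formula holds for all multi-indices of total degree less than $|\vec{n}|$ and substitute the inductive bound for each $D_{\vec{n}-\vec{e}_j}$ into the right-hand side of the recursive inequality. This yields
\[
  D_{\vec{n}} \leq A_{\vec{n}} + \sum_{j=1}^r a_j \sum_{\vec{m} \leq \vec{n}-\vec{e}_j} A_{\vec{n}-\vec{e}_j-\vec{m}} \binom{|\vec{m}|}{m_1,\ldots,m_r} a_1^{m_1}\cdots a_r^{m_r}.
\]
The goal is then to reindex the double sum so that it collapses into the single sum over $\vec{m} \leq \vec{n}$ in the statement. The natural substitution is $\vec{m}' = \vec{m} + \vec{e}_j$, so that the factor $a_j \cdot a_1^{m_1}\cdots a_r^{m_r}$ becomes $a_1^{m'_1}\cdots a_r^{m'_r}$ with $m'_j = m_j+1$, and $A_{\vec{n}-\vec{e}_j-\vec{m}} = A_{\vec{n}-\vec{m}'}$. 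Collecting all contributions with a fixed $\vec{m}'$, the coefficient of $A_{\vec{n}-\vec{m}'} \, a_1^{m'_1}\cdots a_r^{m'_r}$ becomes $\sum_{j: m'_j \geq 1} \binom{|\vec{m}'|-1}{m'_1,\ldots,m'_j-1,\ldots,m'_r}$.

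The key identity that closes the argument is the multinomial Pascal recurrence,
\[
  \sum_{j=1}^r \binom{|\vec{m}'|-1}{m'_1,\ldots,m'_j-1,\ldots,m'_r} = \binom{|\vec{m}'|}{m'_1,\ldots,m'_r},
\]
where terms with $m'_j = 0$ contribute zero (consistent with the restriction $m'_j \geq 1$). Applying this turns the reindexed sum into $\sum_{\vec{m}' \leq \vec{n},\, \vec{m}' \neq \vec{0}} A_{\vec{n}-\vec{m}'}\binom{|\vec{m}'|}{m'_1,\ldots,m'_r} a_1^{m'_1}\cdots a_r^{m'_r}$, and the leading term $A_{\vec{n}}$ supplies exactly the missing $\vec{m}' = \vec{0}$ contribution, completing the sum over all $\vec{m}' \leq \vec{n}$ and matching the claimed formula.

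The main obstacle I anticipate is bookkeeping rather than conceptual: one must be careful that the reindexing $\vec{m}' = \vec{m}+\vec{e}_j$ respects the constraint $\vec{m} \leq \vec{n}-\vec{e}_j$ (equivalently $\vec{m}' \leq \vec{n}$ with $m'_j \geq 1$) and that no multi-index is double-counted or omitted when the sums over $j$ are interchanged with the sum over $\vec{m}'$. Verifying that the multinomial Pascal recurrence handles the boundary cases correctly — precisely where some components vanish — is the delicate point, but it follows directly from the standard convention that a multinomial coefficient with a negative lower index is zero.
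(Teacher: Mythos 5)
Your proposal is correct and takes essentially the same route as the paper: induction on $|\vec{n}|$, substitution of the inductive bound, the reindexing $\vec{m}'=\vec{m}+\vec{e}_j$, and the multinomial Pascal recurrence to collapse the double sum. The only cosmetic difference is that the paper derives that recurrence on the spot, writing each shifted coefficient as $\frac{m_j^*}{|\vec{m}^*|}\binom{|\vec{m}^*|}{m_1,\ldots,m_r}$ and using $\sum_{j=1}^r m_j^*/|\vec{m}^*|=1$, which is exactly your identity.
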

 
A special case is when $A_{\vec{n}}=A$ for every $\vec{n}$, in which case one has
\[    D_{\vec{n}} \leq A \sum_{k=0}^{|\vec{n}|} \left( \sum_{j=1}^r a_j \right)^k .  \]  

\begin{proof}
We will use induction on the length $|\vec{n}|$.
For $|\vec{n}|=0$ we have $\vec{n} = \vec{0}$. Since $D_{\vec{n}} = 0$ whenever $n_j < 0$ for some $j \in \{1,\ldots, r\}$ we find that
$D_{\vec{0}} \leq A_{\vec{0}}$, which corresponds to the required result when $\vec{n}=\vec{0}$.

Suppose that the result is true for every multi-index of length $|\vec{n}|-1$. Then
\[  D_{\vec{n}} \leq A_{\vec{n}} + \sum_{j=1}^r a_j \sum_{\vec{m} \leq \vec{n}-\vec{e}_j} A_{\vec{n}-\vec{e}_j-\vec{m}}
     \binom{|\vec{m}|}{m_1,\ldots,m_r} a_1^{m_1} \cdots a_r^{m_r}.  \]
In the second sum we change the index $m_j$ to $m_j^*-1$ to find
\[  \sum_{\vec{m}^* \leq \vec{n}, m_j^*\neq 0} A_{\vec{n}-\vec{m}^*}
     \binom{|\vec{m}^*|-1}{m_1,\ldots,m_j^*-1,\ldots,m_r} a_1^{m_1} \cdots a_j^{m_j^*-1}\cdots a_r^{m_r} , \]
where $\vec{m}^* = (m_1,\ldots,m_j^*,\ldots,m_r)$.
Now use for $|\vec{m}^*| \neq 0$ the identity
\[    \binom{|\vec{m}^*|-1}{m_1,\ldots,m_j^*-1,\ldots,m_r} = \frac{m_j^*}{|\vec{m}^*|} 
      \binom{|\vec{m}^*|}{m_1,\ldots,m_j^*,\ldots,m_r} \]
to find that the sum becomes
\[   \frac{1}{a_j|\vec{m}^*|} \sum_{\vec{m}^* \leq \vec{n}, m_j^*\neq 0} m_j^* A_{\vec{n}-\vec{m}^*}
     \binom{|\vec{m}^*|}{m_1,\ldots,m_j^*,\ldots,m_r} a_1^{m_1} \cdots a_j^{m_j^*}\cdots a_r^{m_r} . \]
This gives
\[   D_{\vec{n}} \leq A_{\vec{n}} + 
    \sum_{\vec{0} \neq \vec{m}^* \leq \vec{n}} \left( \sum_{j=1}^r \frac{m_j^*}{|\vec{m}^*|} \right)  A_{\vec{n}-\vec{m}^*}
     \binom{|\vec{m}^*|}{m_1,\ldots,m_r} a_1^{m_1} \cdots a_r^{m_r}, \]
which immediately gives the required result.
\end{proof}

\begin{lemma}  \label{lim}
Suppose that $n_j = \lfloor q_j n \rfloor$ and $\sum_{j=1}^r q_j = 1$ and that
\[  \lim_{n \to \infty} A_{\vec{n}} = 0. \]
Then, if $0< a_j < a$, where $0 < a < 1/r$ is such that $a^{q_i} < 1/(1+(r-1)a)$ for $1 \leq i \leq r$, one has
\[  \lim_{n \to \infty} \sum_{\vec{m} \leq \vec{n}} A_{\vec{n}-\vec{m}} \binom{|\vec{m}|}{m_1,\ldots,m_r} a_1^{m_1} \cdots a_r^{m_r} = 0. \]  
\end{lemma}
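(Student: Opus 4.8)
The plan is to read the left-hand side as an integral of $A$ against the discrete weight on $\mathbb{N}^r$ that assigns to $\vec{m}$ the mass $W_{\vec{m}} := \binom{|\vec{m}|}{m_1,\ldots,m_r} a_1^{m_1}\cdots a_r^{m_r}$, and then to pass to the limit by a dominated-convergence (Tannery) argument. The first observation, and the one that uses the smallness hypothesis on $a$, is that these weights are summable over all of $\mathbb{N}^r$: grouping by $|\vec{m}|=k$ and applying the multinomial theorem,
\[
  \sum_{\vec{m}\in\mathbb{N}^r} W_{\vec{m}} = \sum_{k=0}^{\infty}\Bigl(\sum_{j=1}^r a_j\Bigr)^k = \frac{1}{1-\sum_{j=1}^r a_j},
\]
which is finite because $0<a_j<a<1/r$ forces $\sum_j a_j < ra < 1$. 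I would extend the finite sum in the statement to all of $\mathbb{N}^r$ by inserting the indicator $\mathbf{1}_{\vec{m}\le\vec{n}}$, so that the summand becomes $f_n(\vec{m}) := A_{\vec{n}-\vec{m}}\,W_{\vec{m}}\,\mathbf{1}_{\vec{m}\le\vec{n}}$.

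Next I would supply the two inputs Tannery's theorem needs. For domination I use that the $A_{\vec{\ell}}$ form a bounded family, say $A_{\vec{\ell}}\le M$ (natural, since they are positive and convergent); then $0\le f_n(\vec{m}) \le M\,W_{\vec{m}}$ for every $n$, and $\sum_{\vec{m}} M\,W_{\vec{m}}<\infty$ by the display above, an $n$-independent summable majorant. For pointwise convergence, fix $\vec{m}$; since $q_j>0$ each $n_j=\lfloor q_j n\rfloor\to\infty$, so $\vec{n}-\vec{m}$ has all of its components tending to infinity, and the hypothesis $A_{\vec{n}}\to 0$ (read as convergence to $0$ as the multi-index grows in every coordinate) gives $A_{\vec{n}-\vec{m}}\to 0$, hence $f_n(\vec{m})\to 0$. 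Tannery's theorem then allows the interchange of limit and sum, so the whole expression tends to $\sum_{\vec{m}} 0 = 0$.

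If one prefers an elementary estimate, the same conclusion follows by splitting at the boundary. Writing $\vec{\ell}=\vec{n}-\vec{m}$, for a threshold $N$ I separate the indices with $\min_j \ell_j \ge N$, where $A_{\vec{\ell}}<\varepsilon$ and the contribution is at most $\varepsilon/(1-\sum_j a_j)$, from the remaining indices, where $m_i \ge n_i - N$ for some $i$. On each such slab one sums out the complementary coordinates by the negative-binomial identity $\sum_{s\ge 0}\binom{p+s}{p}\sigma^s=(1-\sigma)^{-(p+1)}$ with $\sigma=\sum_{j\ne i}a_j$, leaving a geometric series in $m_i$ of ratio $a_i/(1-\sigma)<1$ (again because $\sum_j a_j<1$), whose tail from $m_i=n_i-N$ is of order $\bigl(a_i/(1-\sigma)\bigr)^{n_i-N}$ and tends to $0$ since $n_i\to\infty$. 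Thus the boundary contribution vanishes while the interior contribution is $O(\varepsilon)$, and letting $\varepsilon\to 0$ finishes the argument. I expect the only genuine obstacle to be exactly this boundary region, where $\vec{n}-\vec{m}$ has a small coordinate and $A_{\vec{n}-\vec{m}}$ need not be small; it is the strict smallness of the $a_j$ imposed by the hypotheses (here $a<1/r$, reinforced by $a^{q_i}<1/(1+(r-1)a)$) that forces the competing weights $W_{\vec{m}}$ to decay fast enough to absorb it.
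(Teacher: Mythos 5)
Your proof is correct, and it takes a genuinely different route from the paper's --- in fact a stronger one. The paper proves the lemma by choosing $n_0$ so that $|A_{\vec{\ell}}|<\varepsilon$ whenever $\ell_j\geq\lfloor n_0q_j\rfloor$ for all $j$, splitting the sum into the interior $I_0=\{\vec{m}\leq\vec{n}-\vec{n}_0\}$ and the boundary slabs $I_j=\{n_j-\lfloor n_0q_j\rfloor<m_j\leq n_j\}$, and then estimating each slab crudely: it bounds $a_j^{m_j}\leq a_j^{n_j-\lfloor n_0q_j\rfloor}$ and replaces the remaining factor of $a_j^{m_j}$ by $1$, which inflates the multinomial sum to $\sum_{k}(1+\sum_{i\neq j}a_i)^k\sim(1+(r-1)a)^{|\vec{n}|}$; the decay $a^{n_j}\approx a^{q_jn}$ must then beat this growth, and that is precisely where the curious hypothesis $a^{q_i}<1/(1+(r-1)a)$ is consumed. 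You avoid this entirely: your Tannery/dominated-convergence argument (majorant $MW_{\vec{m}}$, pointwise vanishing of $A_{\vec{n}-\vec{m}}$ for fixed $\vec{m}$) and equally your elementary variant, which sums each boundary slab \emph{exactly} via the negative-binomial identity and gets a geometric tail of ratio $a_i/(1-\sigma)<1$ with no compensating growth factor, both need only $\sum_{j}a_j<1$, which already follows from $a<1/r$. So you prove a stronger lemma in which the condition $a^{q_i}<1/(1+(r-1)a)$ is superfluous; this would even simplify its use in Lemma \ref{lem}, where one would then only need $\sum_j\hat{a}_j/\delta^2<1$ rather than the paper's condition on $\delta$. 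Two caveats, but both are shared with the paper rather than being gaps relative to it: first, your domination step assumes $A_{\vec{\ell}}\leq M$ uniformly, and this does not actually follow from positivity plus convergence (nothing prevents $A_{\vec{\ell}}$ from blowing up near the boundary of $\mathbb{N}^r$ where some coordinate stays small) --- the paper likewise simply asserts ``we know that the $A_{\vec{n}}$ are bounded''; second, you read the hypothesis $\lim_{n\to\infty}A_{\vec{n}}=0$ as convergence whenever \emph{all} coordinates grow, not merely along the ray $n_j=\lfloor q_jn\rfloor$ --- the paper's own proof makes the same silent strengthening, and it is genuinely needed, since the sum involves $A$ at indices far from that ray.
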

\begin{proof}
For every $\varepsilon >0$ there exists $n_0 \in  \mathbb{N}$ such that for all $n \geq n_0$ we have
that $|A_{\vec{n}}| < \varepsilon$ whenever $n_j \geq \lfloor n_0q_j \rfloor$ for every $j \in \{1,\ldots,r\}$.
We estimate the sum by
\[  \sum_{\vec{m} \leq \vec{n}} |A_{\vec{n}-\vec{m}}| \binom{|\vec{m}|}{m_1,\ldots,m_r} a_1^{m_1} \cdots a_r^{m_r}
  \leq \sum_{\vec{m} \in I_0} \cdots  + \sum_{j=1}^r \sum_{\vec{m} \in I_j} \cdots  \]
where $I_0 = \{\vec{m} \in \mathbb{N}^r: \ \vec{m} \leq \vec{n}-\vec{n}_0\}$ with $\vec{n}_0 = (\lfloor n_0q_1 \rfloor,\ldots,\lfloor n_0q_r \rfloor)$, 
and $I_j = \{ \vec{m} \in \mathbb{N}^r : \ n_j - \lfloor n_0q_j \rfloor < m_j \leq n_j \}$. Note that the sets $I_1,\ldots,I_r$ are not disjoint
so that we indeed get an upper bound. We have
\begin{eqnarray*}    \sum_{\vec{m} \in I_0} |A_{\vec{n}-\vec{m}}| \binom{|\vec{m}|}{m_1,\ldots,m_r} a_1^{m_1} \cdots a_r^{m_r}
    &\leq& \varepsilon \sum_{\vec{m} \in I_0} \binom{|\vec{m}|}{m_1,\ldots,m_r} a_1^{m_1} \cdots a_r^{m_r}  \\
    &\leq& \varepsilon  \sum_{\vec{m} \leq \vec{n}} \binom{|\vec{m}|}{m_1,\ldots,m_r} a_1^{m_1} \cdots a_r^{m_r} \\
    & = &  \varepsilon  \sum_{k=0}^{|\vec{n}|}  \left( \sum_{j=1}^r a_j \right)^k   \\
    & \leq & \frac{\varepsilon}{1-ra}, 
\end{eqnarray*}
where we used that $a_j < a < 1/r$ for $1 \leq j \leq r$. 
On the other hand we know that the $A_{\vec{n}}$ are bounded, so that $|A_{\vec{n}}| \leq M$ for some $M >0$, and then
\begin{multline*}   \sum_{\vec{m} \in I_j} |A_{\vec{n}-\vec{m}}| \binom{|\vec{m}|}{m_1,\ldots,m_r} a_1^{m_1} \cdots a_r^{m_r}  \\
    \leq M a_j^{n_j-\lfloor n_0 q_j \rfloor} \sum_{\vec{m} \in I_j} \binom{|\vec{m}|}{m_1,\ldots,m_r} a_1^{m_1} \cdots 1 \cdots a_r^{m_r} ,
\end{multline*}
where we have used $a_j^{m_j} \leq a_j^{n_j-\lfloor n_0 q_j \rfloor}$, which holds on $I_j$. Furthermore
we have
\begin{eqnarray*}
  M a_j^{n_j-\lfloor n_0 q_j \rfloor} \lefteqn{\sum_{\vec{m} \in I_j} \binom{|\vec{m}|}{m_1,\ldots,m_r} a_1^{m_1} \cdots 1 \cdots a_r^{m_r}} & & \\
    &\leq& M a_j^{n_j-\lfloor n_0 q_j \rfloor} \sum_{\vec{m} \leq \vec{n}} \binom{|\vec{m}|}{m_1,\ldots,m_r} a_1^{m_1} \cdots 1 \cdots a_r^{m_r} \\
     &=&  M a_j^{n_j-\lfloor n_0 q_j \rfloor} \sum_{k=0}^{|\vec{n}|} \left( 1 + \sum_{i=1,i\neq j}^r a_i \right)^k \\
     &\leq& M  a^{n_j-\lfloor n_0 q_j \rfloor} \frac{(1+(r-1)a)^{|\vec{n}|+1}-1}{(r-1)a},
\end{eqnarray*}
where we used that $a_j \leq a$. Clearly, 
\[    \lim_{n \to \infty} a^{n q_j} (1+(r-1)a)^n = 0 \]
whenever $a^{q_j} (1+(r-1)a) < 1$, and this is indeed what was assumed to be true for $a$. Combining our estimates we have
\[  \limsup_{n \to \infty} \left|   \sum_{\vec{m} \leq \vec{n}} A_{\vec{n}-\vec{m}} \binom{|\vec{m}|}{m_1,\ldots,m_r} a_1^{m_1} \cdots a_r^{m_r} \right|
   \leq \frac{\varepsilon}{1-ra} , \]
and since this holds for every $\varepsilon > 0$, the required result follows.
\end{proof}

\begin{lemma} \label{lem}
Suppose that \eqref{limab} holds, where $n_j = \lfloor q_j n\rfloor$, with $q_j > 0$ and $\sum_{j=1}^r q_j = 1$. Then
we have, uniformly on compact subsets $K \subset \mathbb{C} \setminus \mathbb{R}$,
\[   \lim_{n \to \infty} \left| \frac{P_{\vec{n}}(x)}{P_{\vec{n}+\vec{e}_k}(x)} -
     \frac{P_{\vec{n}-\vec{e}_\ell}(x)}{P_{\vec{n}+\vec{e}_k-\vec{e}_\ell}(x)} \right| = 0, \]
for any $k$ and $\ell$ in $\{1,\ldots,r\}$. 
\end{lemma}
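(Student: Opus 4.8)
The plan is to propagate the difference of the two neighboring ratios through the recurrence relation \eqref{ratiorec} and to reduce the statement to the abstract estimates of Lemmas \ref{estimate} and \ref{lim}. Throughout I fix $\ell$ and abbreviate $r_{\vec{n},m}(x) = P_{\vec{n}+\vec{e}_m}(x)/P_{\vec{n}}(x)$ and $s_{\vec{n},j}(x) = P_{\vec{n}-\vec{e}_j}(x)/P_{\vec{n}}(x) = 1/r_{\vec{n}-\vec{e}_j,j}(x)$. By \eqref{Pbound} we have $|s_{\vec{n},j}|\le 1/\delta$ on $K$, hence $|r_{\vec{n},m}|\ge\delta$, while \eqref{ratiorec} and the boundedness of the recurrence coefficients give $|r_{\vec{n},m}|\le C$ for some $C=C(K)$. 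Since $\delta\le |r_{\vec{n},m}|\le C$, the quantity $1/r_{\vec{n},k}-1/r_{\vec{n}-\vec{e}_\ell,k}$ appearing in the statement tends to $0$ if and only if $r_{\vec{n},k}-r_{\vec{n}-\vec{e}_\ell,k}$ does, so it suffices to control the latter.

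First I would subtract the version of \eqref{ratiorec} at $\vec{n}-\vec{e}_\ell$ from the one at $\vec{n}$ for each $1\le m\le r$. Solving for $r_{\vec{n},m}=x-b_{\vec{n},m}-\sum_j a_{\vec{n},j}s_{\vec{n},j}$ and subtracting gives
\[ r_{\vec{n},m}-r_{\vec{n}-\vec{e}_\ell,m} = \bigl(b_{\vec{n}-\vec{e}_\ell,m}-b_{\vec{n},m}\bigr)+\sum_{j=1}^r\bigl(a_{\vec{n}-\vec{e}_\ell,j}s_{\vec{n}-\vec{e}_\ell,j}-a_{\vec{n},j}s_{\vec{n},j}\bigr). \]
Writing the summand as $a_{\vec{n},j}(s_{\vec{n}-\vec{e}_\ell,j}-s_{\vec{n},j})+(a_{\vec{n}-\vec{e}_\ell,j}-a_{\vec{n},j})s_{\vec{n}-\vec{e}_\ell,j}$ and using that $\vec{n}$ and $\vec{n}-\vec{e}_\ell$ tend to infinity in the same direction $\vec{q}$, so that \eqref{limab} yields $b_{\vec{n}-\vec{e}_\ell,m}-b_{\vec{n},m}\to 0$ and $a_{\vec{n}-\vec{e}_\ell,j}-a_{\vec{n},j}\to 0$ while $|s|\le 1/\delta$, the terms other than $\sum_j a_{\vec{n},j}(s_{\vec{n}-\vec{e}_\ell,j}-s_{\vec{n},j})$ contribute a quantity $A_{\vec{n},m}$ with $\sup_K|A_{\vec{n},m}|\to 0$. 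For the remaining term I would use the identity $s_{\vec{n}-\vec{e}_\ell,j}-s_{\vec{n},j}=(r_{\vec{n}-\vec{e}_j,j}-r_{\vec{n}-\vec{e}_j-\vec{e}_\ell,j})/(r_{\vec{n}-\vec{e}_j,j}\,r_{\vec{n}-\vec{e}_j-\vec{e}_\ell,j})$ and bound the denominator below by $\delta^2$.

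Putting $D_{\vec{n}}=\max_{1\le m\le r}\sup_{x\in K}|r_{\vec{n},m}-r_{\vec{n}-\vec{e}_\ell,m}|$ and $A_{\vec{n}}=\max_m\sup_K|A_{\vec{n},m}|$, the previous step produces the recursive inequality
\[ D_{\vec{n}} \le A_{\vec{n}} + \sum_{j=1}^r \frac{a_{\vec{n},j}}{\delta^2}\,D_{\vec{n}-\vec{e}_j}, \]
valid once all components of $\vec{n}$ are positive; on the finitely many boundary indices I would absorb the uniform bound $D_{\vec{n}}\le 2C$ into $A_{\vec{n}}$, which leaves $A_{\vec{n}}\to 0$ intact. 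Since the family $\{a_{\vec{n},j}\}$ is bounded, say by $M_j$, I may replace $a_{\vec{n},j}/\delta^2$ by the constant $\bar{a}_j=M_j/\delta^2$, and Lemma \ref{estimate} then gives $D_{\vec{n}}\le\sum_{\vec{m}\le\vec{n}}A_{\vec{n}-\vec{m}}\binom{|\vec{m}|}{m_1,\ldots,m_r}\bar{a}_1^{m_1}\cdots\bar{a}_r^{m_r}$.

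Here lies the main obstacle: to invoke Lemma \ref{lim} I need $\bar{a}_j$ below some $a$ with $a<1/r$ and $a^{q_i}(1+(r-1)a)<1$, whereas $\bar{a}_j=M_j/\delta^2$ is small only when $\delta$ is large. I would therefore first fix a small $a$ for which these constraints hold (they do for all sufficiently small $a$, since $a^{q_i}(1+(r-1)a)\to 0$) and prove the lemma on compacts $K$ with $\mathrm{dist}(K,\mathbb{R})=\delta$ so large that $\bar{a}_j<a$; for those $K$, Lemma \ref{lim} forces $D_{\vec{n}}\to 0$. To remove the restriction on $\delta$ I would observe that each $f_{\vec{n}}=P_{\vec{n}}/P_{\vec{n}+\vec{e}_k}-P_{\vec{n}-\vec{e}_\ell}/P_{\vec{n}+\vec{e}_k-\vec{e}_\ell}$ is analytic on $\mathbb{C}\setminus\mathbb{R}$ and, by \eqref{Pbound}, locally uniformly bounded independently of $\vec{n}$, hence a normal family on each of the half-planes $\mathrm{Im}\,x>0$ and $\mathrm{Im}\,x<0$. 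Convergence $f_{\vec{n}}\to 0$ on the far region $\{|\mathrm{Im}\,x|>T\}$ established above, combined with Vitali's theorem, then yields $f_{\vec{n}}\to 0$ locally uniformly on each half-plane, and therefore uniformly on every compact $K\subset\mathbb{C}\setminus\mathbb{R}$. A secondary point requiring care is the justification that $a_{\vec{n}-\vec{e}_\ell,j}$ and $b_{\vec{n}-\vec{e}_\ell,m}$ still share the limits $a_j$ and $b_m$, which rests on the fact that shifting a multi-index by a bounded vector does not alter its limiting direction.
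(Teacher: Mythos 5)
Your proposal is correct and follows essentially the same route as the paper's own proof: subtract the recurrence \eqref{rrel} at $\vec{n}$ and $\vec{n}-\vec{e}_\ell$, separate the coefficient differences (which vanish by \eqref{limab}) from the ratio differences, use the lower bound $\delta$ on the moduli of the ratios to get the recursive inequality $D_{\vec{n}} \leq A_{\vec{n}} + \sum_j (\hat{a}_j/\delta^2) D_{\vec{n}-\vec{e}_j}$, apply Lemmas \ref{estimate} and \ref{lim} on compacts far from $\mathbb{R}$, and extend to all compacts by Vitali's theorem. Your cosmetic variations (working with the upward ratios $r_{\vec{n},m}$ rather than their reciprocals, and spelling out the two half-planes and the boundary indices) do not change the argument, though note the boundary indices are infinitely many, not finitely many; absorbing them into $A_{\vec{n}}$ still works because they never enter the region where Lemma \ref{lim} requires smallness.
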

\begin{proof}
From the recurrence relation \eqref{rrel} we have
\[   x  = \frac{P_{\vec{n}+\vec{e}_k}(x)}{P_{\vec{n}}(x)} + b_{\vec{n},k} 
 + \sum_{j=1}^{r} a_{\vec{n},j} \frac{P_{\vec{n}-\vec{e}_j}(x)}{P_{\vec{n}}(x)}. \] 
The same relation but with $\vec{n}$ replaced by $\vec{n}-\vec{e}_\ell$ gives
\[   x  = \frac{P_{\vec{n}+\vec{e}_k-\vec{e}_\ell}(x)}{P_{\vec{n}-\vec{e}_\ell}(x)} + b_{\vec{n}-\vec{e}_\ell,k} 
 + \sum_{j=1}^{r} a_{\vec{n}-\vec{e}_\ell,j} \frac{P_{\vec{n}-\vec{e}_j-\vec{e}_\ell}(x)}{P_{\vec{n}-\vec{e}_\ell}(x)}. \]
Subtract both equations to find
\begin{eqnarray*}
     \frac{P_{\vec{n}+\vec{e}_k}(x)}{P_{\vec{n}}(x)} -  \frac{P_{\vec{n}+\vec{e}_k-\vec{e}_\ell}(x)}{P_{\vec{n}-\vec{e}_\ell}(x)}
  & = & b_{\vec{n}-\vec{e}_\ell,k} -  b_{\vec{n},k} - \sum_{j=1}^r a_{\vec{n},j} \left( \frac{P_{\vec{n}-\vec{e}_j}(x)}{P_{\vec{n}}(x)}
      - \frac{P_{\vec{n}-\vec{e}_j-\vec{e}_\ell}(x)}{P_{\vec{n}-\vec{e}_\ell}(x)} \right) \\
   & &  -\ \sum_{j=1}^r  (a_{\vec{n},j} - a_{\vec{n}-\vec{e}_\ell,j}) \frac{P_{\vec{n}-\vec{e}_j-\vec{e}_\ell}(x)}{P_{\vec{n}-\vec{e}_\ell}(x)} .
\end{eqnarray*}
If we use the bound \eqref{Pbound}, then
\[     \left| \frac{P_{\vec{n}+\vec{e}_k}(x)}{P_{\vec{n}}(x)} - \frac{P_{\vec{n}+\vec{e}_k-\vec{e}_\ell}(x)}{P_{\vec{n}-\vec{e}_\ell}(x)} \right| 
    \geq \delta^2 \left| \frac{P_{\vec{n}}(x)}{P_{\vec{n}+\vec{e}_k}(x)} - \frac{P_{\vec{n}-\vec{e}_\ell}(x)}{P_{\vec{n}+\vec{e}_k-\vec{e}_\ell}(x)} \right|  \]
so that
\begin{eqnarray*}
   \left| \frac{P_{\vec{n}}(x)}{P_{\vec{n}+\vec{e}_k}(x)} - \frac{P_{\vec{n}-\vec{e}_\ell}(x)}{P_{\vec{n}+\vec{e}_k-\vec{e}_\ell}(x)} \right|
   &\leq &  \frac{1}{\delta^2} \sum_{j=1}^r a_{\vec{n},j} \left| \frac{P_{\vec{n}-\vec{e}_j}(x)}{P_{\vec{n}}(x)}
      - \frac{P_{\vec{n}-\vec{e}_j-\vec{e}_\ell}(x)}{P_{\vec{n}-\vec{e}_\ell}(x)} \right| \\
   & &  +\ \frac{1}{\delta^2} |b_{\vec{n},k} -  b_{\vec{n}-\vec{e}_\ell,k}| 
   + \frac{1}{\delta^3} \sum_{j=1}^r  |a_{\vec{n},j} - a_{\vec{n}-\vec{e}_\ell,j}|.
\end{eqnarray*}
If we  use the notation
\[     D_{\vec{n},k,\ell} = \left| \frac{P_{\vec{n}}(x)}{P_{\vec{n}+\vec{e}_k}(x)} 
- \frac{P_{\vec{n}-\vec{e}_\ell}(x)}{P_{\vec{n}+\vec{e}_k-\vec{e}_\ell}(x)} \right|, \]
then this gives
\[   D_{\vec{n},k,\ell} \leq \frac{1}{\delta^2} |b_{\vec{n},k} -  b_{\vec{n}-\vec{e}_\ell,k}| + 
  \frac{1}{\delta^3} \sum_{j=1}^r  |a_{\vec{n},j} - a_{\vec{n}-\vec{e}_\ell,j}|
  + \frac{1}{\delta^2} \sum_{j=1}^r a_{\vec{n},j} D_{\vec{n}-\vec{e}_j,j,\ell}.  \]
The convergence \eqref{limab} implies that $a_{\vec{n},j} \leq \hat{a}_j$ for certain constants $\hat{a}_1,\ldots,\hat{a}_r$.  Now denote 
\[   D_{\vec{n},\ell} = \max_{1 \leq k \leq r} D_{\vec{n},k,\ell}, \]
then we arrive at the inequality
\[   D_{\vec{n},\ell} \leq \sum_{j=1}^r \left( \frac{|b_{\vec{n},j} -  b_{\vec{n}-\vec{e}_\ell,j}|}{\delta^2} + 
     \frac{|a_{\vec{n},j} - a_{\vec{n}-\vec{e}_\ell,j}|}{\delta^3} \right) + \sum_{j=1}^r \frac{\hat{a}_j}{\delta^2} D_{\vec{n}-\vec{e}_j,\ell}. \]
From Lemma \ref{estimate} we then find
\[    D_{\vec{n},\ell} \leq \sum_{\vec{m}\leq \vec{n}} A_{\vec{n}-\vec{m}} \binom{|\vec{m}|}{m_1,\ldots,m_r} 
  \frac{\hat{a}_1^{m_1} \cdots \hat{a}_r^{m_r}}{\delta^{2|\vec{m}|}} , \]
where
\[   A_{\vec{n}} = \sum_{j=1}^r \left( \frac{|b_{\vec{n},j} -  b_{\vec{n}-\vec{e}_\ell,j}|}{\delta^2} + 
     \frac{|a_{\vec{n},j} - a_{\vec{n}-\vec{e}_\ell,j}|}{\delta^3} \right).  \]
Observe that for every $\delta>0$ we have that $A_{\vec{n}} \to 0$ as $n \to \infty$, where $n_j = \lfloor n q_j \rfloor$. 
Hence if we choose the compact set $K^*$ such that $\delta$ is large enough, then $\hat{a}_j/\delta^2$ can be made sufficiently small
so that Lemma \ref{lim} can be applied, from which we find that $D_{\vec{n},\ell} \to 0$ uniformly for $x \in K^*$.
Note that $D_{\vec{n},\ell} \leq 2\delta$ on any compact $K \subset \mathbb{C} \setminus \mathbb{R}$, hence by Vitali's theorem
\cite[Thm. 12.8d on p.~566]{Henrici} we may conclude that $D_{\vec{n},\ell}$ converges uniformly to $0$ on every compact set
$K \subset \mathbb{C} \setminus \mathbb{R}$. This is what needed to be proved.
\end{proof}

\section{Proof of Theorem \ref{thm2}}
The proof of Theorem \ref{thm2} is very similar to the proof of Theorem \ref{thm1}. We only give the modifications which are needed.
Use the recurrence relation \eqref{rrel} but with $x$ replaced by $n^\gamma x$, and divide by $P_{\vec{n}}(n^\gamma x)$ to find
\begin{equation}  \label{ratiorec2}
    x = \frac{P_{\vec{n}+\vec{e}_k}(n^\gamma x)}{n^\gamma P_{\vec{n}}(n^\gamma x)} + \frac{b_{\vec{n},k}}{n^\gamma} 
  + \sum_{j=1}^r \frac{a_{\vec{n},j}}{n^{2\gamma}} \frac{n^\gamma P_{\vec{n}-\vec{e}_j}(n^\gamma x)}{P_{\vec{n}}(n^\gamma x)}.
\end{equation}
The partial fractions decomposition now is
\[   \frac{n^\gamma P_{\vec{n}-\vec{e}_j}(n^\gamma x)}{P_{\vec{n}}(n^\gamma x)} 
 = \sum_{k=1}^{|\vec{n}|} \frac{A_{\vec{n},k}}{x-x_{\vec{n},k}/n^\gamma}, \]
and we have the bound
\begin{equation}  \label{Pbound2}
  \left|  \frac{n^\gamma P_{\vec{n}-\vec{e}_j}(n^\gamma x)}{P_{\vec{n}}(n^\gamma x)}  \right| 
  \leq \sum_{k=1}^{|\vec{n}|} \frac{A_{\vec{n},k}}{|x-x_{\vec{n},k}/n^\gamma|} \leq \frac{1}{\delta}, 
\end{equation}
where $\delta = \min \{|x-y| : \ x \in K,\ y \in \mathbb{R} \} > 0$ is the minimal distance between $K$ and $\mathbb{R}$.
Montel's theorem gives a subsequence $(n_k)_{k \in \mathbb{N}}$ such that
\[     \lim_{k \to \infty} \frac{n_k^\gamma P_{\vec{n}_k-\vec{e}_j}(n_k^\gamma x)}{P_{\vec{n}_k}(n_k^\gamma x)} = h_j(x), \]
uniformly on $K$ for every $j$ with $1 \leq j \leq r$, where $h_j$ is analytic on $K$. From then on the proof
of Theorem \ref{thm1} can be repeated, except that one uses \eqref{ratiorec2} and \eqref{limabN} instead of \eqref{ratiorec} and \eqref{limab}. 
One finally ends up with the algebraic equation \eqref{algeq2} which is in fact the same equation as in \eqref{algeq}.

\section{Examples}  \label{sec:ex}
\subsection{Jacobi-Pi\~neiro polynomials}
These are multiple orthogonal for the Jacobi weights $d\mu_j(x) = x^{\alpha_j}(1-x)^\beta \, dx$ on $[0,1]$, where $\alpha_j, \beta >-1$
and $\alpha_i -\alpha_j \notin \mathbb{Z}$. The recurrence coefficients are given by 
\begin{multline*}   
   a_{\vec{n},j} = \frac{n_j(n_j+\alpha_j)(|\vec{n}|+\beta)}{(|\vec{n}|+n_j+\alpha_j+\beta+1)(|\vec{n}|+n_j+\alpha_j+\beta)
   (|\vec{n}|+n_j+\alpha_j+\beta-1)} \\
    \times \prod_{i=1}^r \frac{|\vec{n}|+\alpha_i+\beta}{|\vec{n}|+n_i+\alpha_i+\beta} \prod_{i\neq j} \frac{n_j+\alpha_j-\alpha_i}{n_j-n_i+\alpha_j-\alpha_i}, 
\end{multline*}
and a more complicated expression for $b_{\vec{n},j}$ (see \cite[\S 5.5]{WVA2}). We therefore have
\[  \lim_{n \to \infty} a_{\vec{n},j} = \frac{q_j^{r+1}}{(1+q_j)^3} \prod_{i=1}^r \frac{1}{1+q_i} \prod_{i\neq j} \frac{1}{q_j-q_i}.  \]
This means that the nearest neighbor recurrence coefficients $a_{\vec{n},j}$ converge only when $q_i\neq q_j$ for every $i \neq j$.
Note that the $a_{\vec{n},j}$ are not necessarily positive, but it is known that the zeros of $P_{\vec{n}}$ and $P_{\vec{n}+\vec{e}_k}$ interlace
since the measures $(\mu_1,\ldots,\mu_r)$ form an AT system (see \cite[Thm.~2.1]{HVA}).   

\subsection{Multiple Hermite polynomials}
They satisfy
\[   \int_{-\infty}^\infty H_{\vec{n}}(x) x^k e^{-x^2+c_jx}\, dx = 0, \qquad k=0,1,\ldots,n_j-1,\ 1 \leq j \leq r,  \]
where $c_j \in \mathbb{R}$ and $c_i \neq c_j$ whenever $i \neq j$. The nearest neighbor recurrence relations are
\[  xH_{\vec{n}}(x) = H_{\vec{n}+\vec{e}_k}(x) + \frac{c_k}{2} H_{\vec{n}}(x) + \sum_{j=1}^r \frac{n_j}{2} H_{\vec{n}-\vec{e}_j}(x), \]
\cite[\S 23.5]{Ismail} so that $a_{\vec{n},j} = n_j/2$ and $b_{\vec{n},k} = c_k/2$. This means that we have to use the scaling with $\gamma=1/2$ to find
\[   \lim_{n \to \infty} \frac{a_{\vec{n},j}}{n} = q_j, \quad \lim_{n \to \infty} \frac{b_{\vec{n},j}}{\sqrt{n}} = 0. \]
This is not very convenient since this implies that the limiting values $b_1,\ldots,b_r$ all coincide and $B_r$ hence has a zero
of multiplicity $r$. We can't use the partial fraction decomposition \eqref{parfrac2} and hence we cannot determine the polynomial $A_{r-1}$.
It is more interesting to let the parameters depend on $n$ and to consider $b_{\vec{n},j} = c_j \sqrt{n}$, so that
\[  \lim_{n \to \infty} \frac{b_{\vec{n},j}}{\sqrt{n}} = c_j/2 .  \]
This happens to be the case of interest when one is dealing with random matrices with external source \cite{ABKII,BK,BKI,BKIII}
and non-intersecting Brownian motions leaving from $r$ different points and arriving at one point \cite{DK,DKV}.

\subsection{Multiple Laguerre polynomials}
There are two kinds of multiple Laguerre polynomials \cite[\S 23.4]{Ismail}. The polynomials of the first kind satisfy
\[    \int_0^\infty L_{\vec{n}}(x) x^k x^{\alpha_j} e^{-x}\, dx = 0, \qquad k=0,1,\ldots, n_j-1,\ 1 \leq j \leq r, \]
where $\alpha_1,\ldots,\alpha_r >-1$ and $\alpha_i - \alpha_j \notin \mathbb{Z}$,
and the recurrence coefficients are \cite[\S 5.3]{WVA2}
\[   a_{\vec{n},j} = n_j(n_j+\alpha_j) \prod_{i\neq j} \frac{n_j+\alpha_j-\alpha_i}{n_j-n_i+\alpha_j-\alpha_i}, \quad
     b_{\vec{n},j} = |\vec{n}| + n_j + \alpha_j + 1.  \]
We can use Theorem \ref{thm2} with the scaling $\gamma=1$ and find
\[   \lim_{n \to \infty} \frac{a_{\vec{n},j}}{n^2} = q_j^{r+1} \prod_{i \neq j} \frac{1}{q_j-q_i}, \quad
     \lim_{n \to \infty} \frac{b_{\vec{n},j}}{n} = 1+q_j. \]
Observe that the limit for $a_{\vec{n},j}/n^2$ does not exist when $q_j=q_i$ for some $i$, and in that case the limit
of $b_{\vec{n},j}/n$ and $b_{\vec{n},i}/n$ is the same, so that the partial fraction decomposition \eqref{parfrac2} is not possible.
So our theorem can only be used when $q_i \neq q_j$ whenever $i\neq j$.

Multiple Laguerre polynomials of the second kind satisfy
\[  \int_0^\infty L_{\vec{n}}(x) x^k x^{\alpha} e^{-c_jx}\, dx = 0, \qquad k=0,1,\ldots, n_j-1,\ 1 \leq j \leq r, \]
where $\alpha > -1$ and $c_1,\ldots,c_r >0$ with $c_i \neq c_j$ whenever $i \neq j$. The recurrence coefficients are \cite[\S 5.3]{WVA2}
\[   a_{\vec{n},j} = \frac{n_j}{c_j^2} (|\vec{n}| + \alpha), \quad b_{\vec{n},j} = \frac{|\vec{n}|+\alpha+1}{c_j} + \sum_{i=1}^r \frac{n_i}{c_i}, \]
so that again we need the scaling $\gamma=1$. Observe that all the $a_{\vec{n},j}$ are now strictly positive whenever $n_j >0$
so that we have interlacing of the zeros of neighboring polynomials. Furthermore, we have
\[ \lim_{n \to \infty} \frac{a_{\vec{n},j}}{n^2} = \frac{q_j}{c_j^2}, \quad \lim_{n \to \infty} \frac{b_{\vec{n},j}}{n} = \frac{1}{c_j}
     + \sum_{i=1}^r \frac{q_i}{c_i}, \]
so that all the limits $b_1,\ldots,b_r$ are different. 

\subsection{Multiple Charlier polynomials}
These are discrete multiple orthogonal polynomials satisfying
\[    \sum_{k=0}^\infty C_{\vec{n}}(k) k^\ell \frac{a_j^k}{k!} = 0, \qquad \ell=0,1,\ldots,n_j-1, \ 1 \leq j \leq r, \]
where $a_1,\ldots,a_r >0$ and $a_i \neq a_j$ whenever $i \neq j$ \cite[\S 23.6.1]{Ismail}. The nearest neighbor recurrence relations are
\[ xC_{\vec{n}}(x) = C_{\vec{n}+\vec{e}_k}(x) + (a_k + |\vec{n}|) C_{\vec{n}}(x) + \sum_{j=1}^r a_jn_j C_{\vec{n}-\vec{e}_j}(x), \]
so that $a_{\vec{n},j} = a_jn_j$ and $b_{\vec{n},j} = a_j+|\vec{n}|$. Again the $a_{\vec{n},j}$ are positive whenever $n_j >0$.
We need to use a scaling with $\gamma=1$ and to scale
$a_k$ to $a_kn$ to get different limits $b_1,\ldots,b_r$. This case was worked out earlier in \cite{Francois}.

\section{Concluding remarks}

Why is it useful to investigate the ratio asymptotics for multiple orthogonal polynomials? Naturally, it is an extension of the analysis
of ratio asymptotic of orthogonal polynomials on the real line, which is connected with Nevai's class $M(a,b)$ and Rakhmanov's theorem.
Apart from that, there are three more explicit reasons 
\begin{enumerate}
  \item The ratio asymptotics allows to find the asymptotic zero distribution of the multiple orthogonal polynomials. Indeed, if $x_{\vec{n},i}$
  $(1 \leq i \leq |\vec{n}|)$ are the zeros of $P_{\vec{n}}$, then
\[   \lim_{n \to \infty} \frac{1}{|\vec{n}|} \sum_{i=1}^{|\vec{n}|} f(x_{\vec{n},i}) = \int f(t)\, d\nu(t), \]
for every bounded and continuous function $f$ on $\mathbb{R}$,
where the Stieltjes transform of the measure $\nu$ is given by
\[   \int \frac{d\nu(t)}{x-t} = \lim_{n \to \infty} \frac{1}{|\vec{n}|} \frac{P_{\vec{n}}'(x)}{P_{\vec{n}}(x)}. \]
A slight modification is needed when one uses the a scaling $n^\gamma$ as in Theorem \ref{thm2}. The asymptotic zero distribution can therefore
be obtained from the ratio asymptotics through the formula
\[  \frac{P_{\vec{n}}'(x)}{P_{\vec{n}}(x)} = \frac{P_{\vec{n}-n_r\vec{e}_j}'(x)}{P_{\vec{n}-n_r\vec{e}_j}(x)}
    + \sum_{k=0}^{n_r-1} \left( \frac{P_{\vec{n}-k\vec{e}_r}(x)}{P_{\vec{n}-(k+1)\vec{e}_r}(x)} \right)' \Big/
       \left( \frac{P_{\vec{n}-k\vec{e}_r}(x)}{P_{\vec{n}-(k+1)\vec{e}_r}(x)} \right), \]
which expresses the ratio $P_{\vec{n}}'(x)/P_{\vec{n}}(x)$ for $r$ measures as a similar ratio for $r-1$ measures
and the ratio of neighboring multiple orthogonal polynomials, for which the asymptotic behavior is given in Theorems \ref{thm1} and \ref{thm2}.
  \item It is known that there usually is a vector equilibrium problem of the form: minimize the energy functional
\[   E(\nu_1,\ldots,\nu_r) = \sum_{i=1}^r \sum_{j=1}^r C_{i,j} I(\nu_i,\nu_j) + \sum_{j=1}^r \int V_j(x)\, d\nu_j(x), \]
where $I(\nu_i,\nu_j)$ is the mutual logarithmic energy of $\nu_i$ and $\nu_j$ and $V_j$ is an external field, 
possibly with constraints $\nu_i \leq \sigma_i$ for some $1 \leq i \leq r$. To find the solution, one often needs
a Riemann surface and certain rational functions on that Riemann surface. The Riemann surface for the algebraic function $z$ satisfying
\eqref{algeq} is the natural geometric object in this case. This Riemann surface has $r+1$ sheets, it has genus 0 and there are branch points
when \eqref{algeq} has multiple roots, i.e., when $B_r^2(z)-A_{r-1}(z)B_r'(z) + A_{r-1}'(z)B_r(z) = 0$, which shows that there are $2r$ branch points.
    \item  If one wants to investigate the (strong) asymptotic behavior of the multiple orthogonal polynomials, then a useful method
    is to use the Riemann-Hilbert problem for the multiple orthogonal polynomials \cite{VAGeKu}. If one wants to use the Deift-Zhou steepest descent
    analysis for Riemann-Hilbert problems, then the first step is to transform the Riemann-Hilbert problem to a Riemann-Hilbert problem which is normalized at infinity. This requires knowledge of the (expected) asymptotic behavior of $P_{\vec{n}}(x)$, which can be expressed in terms
of the measures $(\nu_1,\ldots,\nu_r)$ from the equilibrium problem. Again the Riemann surface for \eqref{algeq} is the natural object for the Riemann-Hilbert analysis.
\end{enumerate}  

Our Theorems \ref{thm1} and \ref{thm2} have to be modified in case some of the limits $b_1,\ldots,b_r$ are equal, which is the case for
multiple Laguerre polynomials of the first kind and Jacobi-Pi\~neiro polynomials when $q_i=q_j$ for some $i \neq j$. If $b_1^*,\ldots,b_s^*$
are the distinct limits, then one has to use the polynomial $B_s(z) = (z-b_1^*)\cdots(z-b_s^*)$ of degree $s <r$. Then \eqref{defz}
implies that
\[  x-z = \sum_{j=1}^s \frac{a_j^*}{z-b_j^*}, \qquad a_j^* = \sum_{b_i=b_j^*} a_i.  \]
So one really needs the asymptotic behavior of the sum $\sum_{b_i=b_j^*} a_{\vec{n},i}$
rather than the behavior of every individual $a_{\vec{n},j}$ in this case.

\bibliographystyle{amsplain}

\begin{thebibliography}{99}
\bibitem{ABKII} A.I Aptekarev, P.M. Bleher, A.B.J. Kuijlaars, 
\textit{Large $n$ limit of Gaussian random matrices with external source. II},
Comm. Math. Phys.  \textbf{259}  (2005),  no.~2, 367--389. 
\bibitem{ApKaLoRo} A.I. Aptekarev, V. Kalyagin, G. L\'opez Lagomasino, I.A. Rocha,
\textit{On the limit behavior of recurrence coefficients for multiple orthogonal polynomials},
J. Approx. Theory  \textbf{139}  (2006),  no.~1--2, 346--370. 
\bibitem{ApKaSaff} A.I. Aptekarev, V.A. Kalyagin, E.B. Saff, E. B.,
\textit{Higher-order three-term recurrences and asymptotics of multiple orthogonal polynomials}, 
Constr. Approx.  \textbf{30}  (2009),  no.~2, 175--223.
\bibitem{ApLoRo} A.I. Aptekarev, G. L\'opez Lagomasino, I.A. Rocha, 
\textit{Asymptotic behavior of the ratio of Hermite-Pad\'e polynomials for Nikishin systems} (in Russian),
Mat. Sb.  \textbf{196}  (2005),  no.~8, 3--20;  translation in 
Sb. Math.  \textbf{196}  (2005),  no. 7--8, 1089--1107. 
\bibitem{BK} P.M. Bleher, A.B.J Kuijlaars, 
\textit{Random matrices with external source and multiple orthogonal polynomials}, 
Int. Math. Res. Not.  \textbf{2004} (2004),  no.~3, 109--129.
\bibitem{BKI} P. Bleher, A.B.J. Kuijlaars, 
\textit{Large $n$ limit of Gaussian random matrices with external source. I,} 
Comm. Math. Phys.  \textbf{252}  (2004),  no.~1--3, 43--76. 
\bibitem{BKIII} P.M. Bleher, A.B.J. Kuijlaars,
\textit{Large $n$ limit of Gaussian random matrices with external source. III. Double scaling limit},
Comm. Math. Phys.  \textbf{270}  (2007),  no.~2, 481--517.
\bibitem{DK} E. Daems, A.B.J. Kuijlaars, 
\textit{Multiple orthogonal polynomials of mixed type and non-intersecting Brownian motions}, 
J. Approx. Theory  \textbf{146}  (2007),  no.~1, 91--114.
\bibitem{DKV} E. Daems, A.B.J. Kuijlaars, W. Veys, 
\textit{Asymptotics of non-intersecting Brownian motions and a $4\times 4$ Riemann-Hilbert problem}, 
J. Approx. Theory  \textbf{153}  (2008),  no.~2, 225--256. 
\bibitem{DeLoLo} S. Delvaux, A. L\'opez, G. L\'opez Lagomasino, 
\textit{On a family of Nikishin systems with periodic recurrence coefficients} (Russian),  
Mat. Sb.  \textbf{204} (2013),  no.~1, 47--78;  translation in Sb. Math.  \textbf{204}  (2013),  no. 1--2, 43--74. 
\bibitem{HVA} M. Haneczok, W. Van Assche,
\textit{Interlacing properties of zeros of multiple orthogonal polynomials}, 
J. Math. Anal. Appl. \textbf{389} (2012), 429--438.
\bibitem{Henrici} P. Henrici,
\textit{Applied and Computational Complex Analysis}, Vol.~2,
Wiley Classics Library, John Wiley \&\ Sons, New York, 1991. Original published in 1977.
\bibitem{Ismail} M.E.H. Ismail,
\textit{Classical and Quantum Orthogonal Polynomials in One Variable},
Encyclopedia of Mathematics and its Applications \textbf{98}, Cambridge University Press, 2005 (paperback edition, 2009).
\bibitem{Kalyagin} V.A. Kaliaguine, 
\textit{On operators associated with Angelesco systems}, 
East J. Approx.  \textbf{1}  (1995),  no.~2, 157--170. 
\bibitem{LoLo} A. L\'opez Garc\'\i a, G. L\'opez Lagomasino, 
\textit{Ratio asymptotic of Hermite-Pad\'e orthogonal polynomials for Nikishin systems. II}, 
Adv. Math.  \textbf{218}  (2008),  no.~4, 1081--1106. 
\bibitem{Francois} F. Ndayiragije, W. Van Assche,
\textit{Asymptotics for the ratio and the zeros of multiple Charlier polynomials},
J. Approx. Theory \textbf{164} (2012), 823--840.
\bibitem{WVA2} W. Van Assche,
\textit{Nearest neighbor recurrence relations for multiple orthogonal polynomials}, 
J. Approx.\ Theory \textbf{163} (2011), 1427--1448.
\bibitem{VAGeKu} W. Van Assche, J.S. Geronimo, A.B.J. Kuijlaars, 
\textit{Riemann-Hilbert problems for multiple orthogonal polynomials}, 
in ``Special functions 2000: current perspective and future directions'' (Tempe, AZ), 
NATO Sci. Ser. II Math. Phys. Chem. \textbf{30}, Kluwer Acad. Publ., Dordrecht, 2001, pp.~23--59.
\end{thebibliography}

\end{document}